\renewcommand{\th}{\theta}
\renewcommand{\l}{\lambda}
\newcommand{\n}{\nu}
\newcommand{\ph}{\phi}
\def\ph{\phi}
\def\md#1{\ \mbox{\rm(mod }{#1})}
\def\npp#1{N_{\ph}^+(#1)}
\def\ol{\overline}
\def\ph{\phi}
\newcommand{\Q}{{\mathbb Q}}
\newcommand{\Z}{{\mathbb Z}}
\newcommand{\F}{{\mathbb F}}
\def\md#1{\ \mbox{\rm(mod }{#1})}
\def\npp#1{N_{\ph}^+(#1)}
\newcommand{\aF}{\mathfrak a}
\newcommand{\pF}{\mathfrak p}
\newtheorem{theorem}{Theorem}[section]
\newtheorem{lemma}[theorem]{Lemma}
\newtheorem{corollary}[theorem]{Corollary}
\theoremstyle{definition}
\newtheorem{definitions}[theorem]{Definitions}
\theoremstyle{remark}
\newtheorem{example}[theorem]{Example}
\newtheorem{remarks}[theorem]{Remarks}
\begin{document}
\title[]{The index of the   septic  number field defined by $x^7+ax^5+b$}
\textcolor[rgb]{1.00,0.00,0.00}{}
\author{  Hamid Ben Yakkou  }\textcolor[rgb]{1.00,0.00,0.00}{}
\address{Faculty of Sciences Dhar El Mahraz, P.O. Box  1874 Atlas-Fes , Sidi mohamed ben Abdellah University,  Morocco}\email{beyakouhamid@gmail.com }
\keywords{Monogenity, Power integral basis, Theorem of Ore, prime ideal factorization, common index divisor} \subjclass[2010]{11R04,
11R16, 11R21}
\maketitle
\vspace{0.3cm}
\begin{abstract}  
 Let $K $ be a septic number field generated by   a complex   root $\th$ of a monic irreducible trinomial  $ F(x)= x^7+ax^5+b \in \Z[x]$.  Let $i(K)$ be the index of $K$.  In this paper,  we show that  $i(K) \in \{1, 2, 4\}$. In a such way, we  answer to Problem $22$ of Narkiewicz \cite{Na}  for these number fields. In particular, we provide sufficient conditions for which $K$ is non-monogenic. We illustrate our results by some computational examples.
\end{abstract}
\maketitle
\section{Introduction and statements of earlier  results}
Let $K $ be a number field generated by $\th$, a root of a monic irreducible polynomial $F(x)\in \Z[x]$ of degree $n$, and $A_K$  its ring of integers.  By \cite[Theorem 2.10]{Na}, $A_K$ is a free $\Z$-module of rank $n$.   For every $\eta \in A_K$ generating $K$, let  $(A_K: \Z[\eta])= |A_K/\Z[\eta]|$  be the index of $\Z[\eta] $ in $A_K$,  called the index of $\eta$. The ring $A_K$ is said to have a power integral basis if it admits  a $\Z$-basis $(1,\eta,\ldots,\eta^{n-1})$ for some $\eta\in A_K$; $A_K=\Z[\eta]$.  In a such case,  $K$ is said to be monogenic. Otherwise, $K$ is said not monogenic.

In this paper, $i(K)$ will denote  the  index of   $K$, defined as follows: 
\begin{equation}\label{Defi(K)}
i(K) = \gcd \ \{(A_K:\Z[\eta]) \,| \ \eta \in A_K  \, \mbox{and}\,  K=\Q(\eta)\}. 
\end{equation} 
A rational prime $p$ dividing $i(K)$ is called a common index divisor of $K$ (or nonessential discriminant divisor of $K$). Note that if $K$ is monogenic, then  $i(K) = 1$. But, if $i(K)>1$; equivalently if there exist a rational  prime $p$ dividing $i(K)$,  then  $K$ is not monogenic.  The canonical examples of monogenic number fields are quadratic and cyclotomic fields. 
 The first non-monogenic number field was given by Dedekind in 1878. He showed that the cubic number field $\Q(\th)$ is not monogenic when $\th$ is a root of the polynomial $x^3-x^2-2x-8$ (cf. \cite[p.64]{Na}). More precisely, he showed that $2$ is a common index divisor of $K$.  The problem of testing the monogenity of number fields and constructing power integral bases have been intensively studied. There are extensive   results regarding this problem. These results were  treated by using  different approachs. Ga\'{a}l, Gy\H{o}ry,  Peth\H{o}, Pohst,  Remete (cf. \cite{Gacta2, G19, Ga21,  Gacta1, GR17,Gyoryredecide, PP}) and their  research teams who succeeded to study the monogenity of several number fields, are  based in their approach on the  resolution of  index form equations. In  \cite{Gyoryredecide},  Gy\H{o}ry   gave effective bounds of the  integral solutions of index form equations, see also \cite{Gyoryr1983}. In \cite{GP}, Ga\'{a}l,  Peth\H{o} and Pohst studied indices  of quartic number fields.  In \cite{DS}, Davis and Spearman showed that  the index of a  quartic number  field defined by $x^4+ax+b$ contained in the set $\{1, 2, 3, 6\}$.   In\cite{Gacta1}, Ga\'{a}l and Gy\H{o}ry described an algorithm to solve  index form equations in quintic number fields  and they computed all generators of power integral bases in  totally real quintic number fields with Galois group $S_5$. In  \cite{Gacta2}, Bilu, Ga\'{a}l and Gy\H{o}ry   studied  the monogenity of    totally real sextic number fields with Galois group $S_6$. In \cite{GR17},  Ga\'al and Remete studied the monogenity of pure number  fields  $\Q(\sqrt[n]{m}),$ where  $3\leq n\leq 9$ and $m$ is square free. They  also showed in \cite{GRN} that for a square free rational integer  $m \equiv 2, 3  \md{4}$,  the octic number filed $\Q(i, \sqrt[4]{m})$ is not monogenic.   In \cite{ PP},   Peth\H{o} and  Pohst studied indices in multiquadratic number fields.  Also, in \cite{PethoZigler}, Peth\H{o} and Ziegler gave an efficient criterion to decide whether the maximal order of a biquadratic field has a unit power integral basis or not.
The books \cite{EG} by Evertse and   Gy\H{o}ry, and \cite{G19} by  Ga\'al  give detailed surveys on the discriminant, the index form theory and its applications, including related Diophantine equations and monogenity of number fields. Nakahara's research team   based on the existence of relative  power  integral bases of some special  sub-fields, they studied the monogenity of several number fields:   Ahmad,  Nakahara and  Husnine \cite{ANHN} proved that for a square free rational integer $m$, if $m \equiv 2, 3 \md 4$ and $m \not \equiv  \pm 1 \md 9, $ then the sextic pure field $K = \Q(\sqrt[6]{m})$ is monogenic. But, it is not monogenic when $m\equiv 1\md{4}$, see \cite{AN} by Ahmad,  Nakahara and  Hameed. In \cite{Smtacta},  Smith studied the monogenity of radical extensions and  he  gave  sufficient conditions for a  Kummer extension to be not monogenic.
In \cite{JW}, Jones and White gave infinite parametric families   of monogenic trinomials
with non square free discriminant. 

Recently (2021),   Ga\'al studied the monogenity of sextic number fields defined by $x^6+ax^3+b$ (see \cite{Ga21}). Based on the important works of \O. Ore, Gu\`{a}rdia,  Montes and  Nart about the application of   Newton polygon techniques  on the factorization of ideals of the ring $A_K$ into a product of powers of prime ideals (see \cite{Nar, Narprime, MN92, O}), several results obtained about the monogenity and indices of number fields defined by trinomials. 
For $x^7+ax^3+b$, see  \cite{FKcom} by  El Fadil and Kchit. For  $x^8+ax+b$, see  \cite{BATCA} by  Ben Yakkou. Also, in \cite{BRM}, we   studied the non-monogenity of number fields defined by $x^n+ax^m+b$. 

The goal of the present paper is to calculate the index  of the  number field generated by a root  of a monic irreducible trinomial of type  $F(x)=x^7+ax^5+b$.  Note that, the above mentioned   results  did not answer the question  of  the monogenity  and indices of number fields defined  by $x^{7}+ax^5+b$. Note also  that  in the case of the septic number fields defined by $x^7+ax^5+b$, for the moment we don't have any general practical procedure for  solving the corresponding index form equations $I(x_2, \ldots, x_7)=\pm 1$. For this reason, we have based our method on prime ideal factorization  techniques. 
\section{Main results}
In what follows, let  $K $ be a number field generated by $\th$, a root of a monic irreducible trinomial $F(x)= x^7+ax^5+b \in \Z[x]$, where 
$ab \neq 0$ and $A_K$  its ring of integers. For every rational prime 
$p$ and any non-zero  $p$-adic integer $m$, $\n_p(m)$  denote the $p$-adic valuation of $m$; the highest power of $p$ dividing $m$, and $m_p := \frac{m}{p^{\n_p(m)}}$. Without loss of generality, for every prime $p$, we assume that \begin{eqnarray}\label{Hypothese}
\n_p(a)<2 \,\, \text{or} \,\, \n_p(b)<7.
\end{eqnarray}
For the simplicity, if $pA_K=\pF_1^{e_1}\cdots\pF_g^{e_g}$ is the   factorization of $pA_K$ into a product of powers of prime ideals in $A_K$ with residue degrees $f(\pF_i/p)=[A_K/\pF_i : \Z/p\Z]=f_i$, then we write $pA_K = [f_1^{e_1}, \ldots, f_g^{e_g}]$. Also, if $e_i=1$ for some $i$, then we shortly write $f_i$ instead of $f_i^{e_i}$. Also, by  the Fundamental Equality (\cite[Theorem 4.8.5]{Co}), one has: 
\begin{eqnarray}\label{FE}
\sum_{i=1}^{g}e_if_i=7=\deg(K).
\end{eqnarray}
  In this paper we prove the following results. 
  \begin{theorem}\label{pge3}    Let $K= \Q(\th)$ be a number field with $\th$ a root of a monic  irreducible polynomial $F(x)=x^7+ax^5+b \in \Z[x]$.  Then for any odd rational  $p$, $p$ is not a common index divisor of $K$; $p$ does not divide $i(K)$.
  \end{theorem}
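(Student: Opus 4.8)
The plan is to combine the classical description of common index divisors with an explicit Newton--polygon analysis of the factorisation of $pA_K$.

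Recall (see \cite{Na}) that an odd prime $p$ divides $i(K)$ if and only if there is an integer $f\ge 1$ for which the number $m_f$ of prime ideals of $A_K$ above $p$ of residue degree $f$ exceeds the number $N_p(f)$ of monic irreducible polynomials of degree $f$ in $\F_p[x]$. By the Fundamental Equality \eqref{FE} one has $m_f\le\lfloor 7/f\rfloor$, so $m_f\ge 1$ forces $f\le 7$, $m_2\le 3$, $m_3\le 2$, $m_f\le 1$ for $f\ge 4$; since $N_p(1)=p$, $N_p(2)=(p^2-p)/2\ge 3$, $N_p(3)=(p^3-p)/3\ge 8$ and $N_p(f)\ge 1$ for every odd $p$, the only inequality that can fail is $m_1\le p$. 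For $p\ge 7$ this is automatic ($m_1\le 7\le p$), so it remains only to prove $m_1\le 3$ for $p=3$ and $m_1\le 5$ for $p=5$.

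To this end I would determine the decomposition of $pA_K$ for $p\in\{3,5\}$ by applying Ore's theorem to the $\phi$-adic Newton polygons of $F$. If $p\nmid b$, the identity $F'(x)=x^4(7x^2+5a)$ together with $\bar F(0)=\bar b\ne 0$ shows that $\gcd(\bar F,\bar F')$ divides $x^2-\bar a$; hence for $p=5$ the reduction $\bar F$ is separable, so $5\nmid(A_K:\Z[\th])$ and $m_1=\#\{\text{roots of }\bar F\text{ in }\F_5\}\le 5$, while for $p=3$ either $\bar F$ is separable, or its unique repeated factor is the irreducible quadratic $x^2-\bar a$ with separable cubic complement, or it is one linear factor of multiplicity $2$ (in which case the Newton polygon at that factor yields at most two primes of residue degree $1$, and the complementary separable quintic is seen to have no root in $\F_3$); in every case $m_1\le 3$. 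If $p\mid b$, then $\bar F=x^5(x^2+\bar a)$: the factor $x^2+\bar a$ contributes at most two primes of residue degree $1$, and only when $-\bar a$ is a nonzero square modulo $p$; the $\phi=x$ part is governed by a Newton polygon supported on the abscissae $0,5,7$, with a single side $(0,\n_p(b))$--$(5,0)$ when $p\nmid a$ and at most two sides in $[0,7]$ when $p\mid a$, the latter being pinned down by the normalisation \eqref{Hypothese}. Since the nonzero coefficients of $F$ occur only at $x^0,x^5,x^7$, the residual polynomial of any side of integer slope has the shape $c_0+c_\ell y^\ell$ with $\ell\in\{2,5,7\}$, which is separable over $\F_3$ for each such $\ell$; thus Ore's theorem always applies when $p=3$, and the same bookkeeping gives $m_1\le 3$ for $p=3$ and $m_1\le 5$ for $p=5$ whenever all residuals are separable.

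The one remaining obstacle, and the heart of the argument, is that for $p=5$ a side of length $5$ and integer slope produces the inseparable residual $c_0+c_5y^5=c_5(y-\gamma)^5$ over $\F_5$, leaving Ore's theorem inconclusive; this occurs exactly when $5\mid b$, $5\mid\n_5(b)$, $5\nmid a$, or when $\n_5(a)=1$ and $\n_5(b)\equiv 1\pmod 5$. I would handle it by a second-order (Montes) analysis, equivalently by studying the degree-$5$ factor $F_1$ of $F$ over $\Q_5$ attached to that side: all roots of $F_1$ lie in a single residue disc on which $F'$ does not vanish, so $F$, and hence $F_1$, has at most one root in $\Q_5$; since moreover $\mu_5\not\subset\Q_5$, the polynomial $F_1$ is Krasner-close to $x^5-c$ with $\n_5(c)\equiv 0\pmod 5$, and such a polynomial factors over $\Q_5$ only as an irreducible quintic (totally ramified) or as a linear factor times an irreducible quartic (a twist of the fifth cyclotomic polynomial, cutting out a totally ramified quartic extension). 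Either way $F_1$ contributes at most two primes of residue degree $1$ to $5A_K$, the extreme configuration being $[1,1^4]$, so $m_1\le 2+2<5$ when $5\nmid a$ and $m_1\le 2+1<5$ when $5\mid a$. Combining all the cases, $m_1\le p$ for every odd $p$, and the criterion recalled above gives $p\nmid i(K)$, which is the assertion of the theorem.
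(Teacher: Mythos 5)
Your overall architecture is sound and your conclusions agree with the paper's: reducing the problem, via Lemma \ref{comindex} and the Fundamental Equality, to the single inequality $L_p(1)\le p$ for $p\in\{3,5\}$ is a legitimate (and more self-contained) substitute for the paper's appeal to \.{Z}yli\'{n}ski/Engstrom, and your treatment of $p=3$ and of the regular $p=5$ configurations (separability of $\bar F$ when $p\nmid b$ for $p=5$, repeated factors dividing $x^{2}-\bar a$ for $p=3$, binomial residual polynomials $c_{0}+c_{\ell}y^{\ell}$ with $\ell\in\{2,5,7\}$ under the normalisation (\ref{Hypothese})) is correct and matches what the paper obtains case by case in Tables \ref{table2} and \ref{table3}, only with lighter bookkeeping since you bound $L_p(1)$ instead of computing the full splitting type. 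Where you genuinely diverge from the paper is the one delicate situation for $p=5$, namely a side of length $5$ and integer slope (your identification of when this happens, $5\mid\nu_5(b)$ with $5\nmid a$, or $\nu_5(a)=1$ with $\nu_5(b)\equiv 1\pmod 5$, is in fact the accurate condition): the paper resolves it by replacing the lifting $x$ of $\bar x$ by $\psi_1(x)=x-5^{k}c$ and re-running Ore's Theorem \ref{ore} on the new polygon, which yields the quintic part $[1^5]$ or $[1,1^4]$ by a purely combinatorial computation, whereas you argue directly over $\Q_5$ about the degree-$5$ factor $F_1$.

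That is exactly where your write-up has a gap. The inference ``all roots of $F_1$ lie in one residue disc on which $F'$ does not vanish, so $F_1$ has at most one root in $\Q_5$'' is not a valid principle over $\Q_p$ (non-archimedean Rolle only locates a zero of $F'$ in an enlarged disc), and the assertion that $F_1$ is ``Krasner-close to $x^{5}-c$'' is stated without the estimate that Krasner's lemma requires. Both claims are true here, but they need the quantitative input you omit: for a root $\theta$ of $F_1$ of valuation $h$ one has $\nu_5(F'(\theta))=4h+1$ (resp. $4h+2$ when $\nu_5(a)=1$), so the four differences to the other roots of $F_1$, each of valuation strictly greater than $h$ because the residual is $(y-\gamma)^5$, sum to exactly $4h+1$; this forbids two roots in $\Q_5$ (a rational difference would have valuation at least $h+1$). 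Likewise $\theta^{5}=-b/(a+\theta^{2})$ gives $\nu_5(\theta^{5}+b/a)=7h$, to be compared with the separation $h+\tfrac14$ of the roots of $x^{5}+b/a$, before Krasner can be invoked. Note also that the strong dichotomy $[1^5]$ versus $[1,1^4]$ is not needed for the theorem: once ``at most one rational root'' is established, $F_1$ contributes at most three primes of residue degree $1$, which already gives $L_5(1)\le 5$. So either supply these valuation estimates, or simply adopt the paper's shift of lifting, which settles the case with no appeal to Krasner at all.
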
 
From the above theorem, the only candidate rational prime to divide $i(K)$ is $2$.   Thus, either $i(K)=1$ or $i(K)=2^k$ for some positive integer $k$. The following result gives the complete answer.
\begin{theorem}\label{p=2}  Let $K$ be a number field generated by a complex root $\th$ of a monic irreducible trinomial $F(x)=x^7+ax^5+b \in \Z[x]$.  	Then  Table \ref{table1} gives  the form of  factorization of the ideal  $2A_K$ into a product of powers of prime ideals of $A_K$, and the value of the index  $i(K)$ in every case. Furthermore, $2$ is common index divisor of $K$ if and only if one of the conditions $C9, C10, C11, C17$ holds.
\end{theorem}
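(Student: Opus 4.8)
The plan is to compute, for every admissible pair $(\n_2(a),\n_2(b))$, the shape of the decomposition $2A_K=[f_1^{e_1},\dots,f_g^{e_g}]$ by Newton polygon methods, and then to deduce $i(K)$ from this shape via the classical criterion for common index divisors. I would first reduce modulo $2$. Writing $\ol F=x^7+\ol a x^5+\ol b\in\F_2[x]$, four situations arise. If $a$ and $b$ are both odd, then $\ol F=(x^2+x+1)(x^5+x^4+x^3+x+1)$. If $a$ is even and $b$ odd, then $\ol F=x^7+1=(x+1)(x^3+x+1)(x^3+x^2+1)$. If $a$ is odd and $b$ even, then $\ol F=x^5(x+1)^2$. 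If $a$ and $b$ are both even, then $\ol F=x^7$. In the first two cases $\ol F$ is separable, so $2$ does not divide the index $(A_K:\Z[\th])$ and $2A_K$ factors exactly as $\ol F$ does; this gives $2A_K=[2,5]$ and $2A_K=[1,3,3]$ respectively. In the last two cases $2$ ramifies, and here the normalisation \eqref{Hypothese} is what keeps the list of sub-cases finite, since it excludes the substitution $\th\mapsto 2\th$, which is available precisely when $\n_2(a)\ge 2$ and $\n_2(b)\ge 7$.

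For the ramified cases I would study the $\ph$-Newton polygon of $F$ at each repeated irreducible factor $\ph$ of $\ol F$: at $\ph=x$ always, and, when $a$ is odd and $b$ even, also at $\ph=x+1$. For $\ph=x$ the relevant vertices are $(0,\n_2(b))$, $(5,\n_2(a))$, $(7,0)$; for $\ph=x+1$ one expands $F$ about $-1$, using $F(-1)=b-a-1$, $F'(-1)=7+5a$, $\tfrac12 F''(-1)=-(21+10a)$, and notes that the last coefficient is a $2$-adic unit, so the principal polygon there has horizontal length $2$. Whenever a side has slope $-h/e$ in lowest terms and a separable residual polynomial $\prod_j\psi_j^{m_j}$ over $\F_2$, Ore's theorem \cite{O} converts it into primes of $A_K$ with ramification $e$ and residue degrees $\deg\psi_j$. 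The cases this leaves open are those with an inseparable residual polynomial, which over $\F_2$ is a power of $x$ or of $x+1$; for these I would pass to second-order Newton polygons in the style of Montes--Nart \cite{MN92, Nar, Narprime}, replacing $\ph=x$ by a lift of the form $\ph_2=x-2^{\,j}$ and iterating. Running through all admissible $(\n_2(a),\n_2(b))$, together with the finer congruence conditions on $a$ and $b$ forced by the higher-order polygons, produces the cases $C1,\dots,C17$ listed in Table \ref{table1} and their factorization types.

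It then remains to pass from factorization types to values of $i(K)$. Over $\F_2$ there are exactly $2$ monic linear polynomials, $1$ monic irreducible quadratic, and $2$ monic irreducible cubics; by the classical criterion (cf. \cite{EG} and \cite[Problem 22]{Na}), a rational prime $p$ is a common index divisor of $K$ if and only if, for some $f\ge 1$, the number of primes of $A_K$ above $p$ of residue degree exactly $f$ exceeds the number of monic irreducible polynomials of degree $f$ over $\F_p$. Since $[K:\Q]=7$, for $p=2$ this forces $2A_K$ to have at least three prime factors of residue degree $1$ (or at least two of residue degree $2$); checking Table \ref{table1}, these are exactly the lines $C9$, $C10$, $C11$, $C17$, which proves the last assertion. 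For each of these types the exponent $\n_2(i(K))$ depends only on the decomposition type (Engstrom's theorem for fields of degree at most $7$) and works out to $1$ or $2$; together with Theorem \ref{pge3}, which shows no odd prime divides $i(K)$, this yields the value of $i(K)$ recorded in each line, and in particular $i(K)\in\{1,2,4\}$.

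The main obstacle is the inseparable cases. First-order Newton polygons settle most of the table, but whenever the residual polynomial at $\ph=x$ degenerates to $(x+1)^5$ --- which happens exactly when $5\mid\n_2(b)$, or when $\n_2(a)=1$ and $5\mid(\n_2(b)-1)$ --- or whenever the residual polynomial at $\ph=x+1$ degenerates to $(x+1)^2$, one must invoke the Montes--Nart second-order machinery and keep track of enough $2$-adic digits of $a$ and $b$ to separate the ensuing sub-cases. This is where essentially all the technical work lies, and it is what fragments the analysis into the conditions $C1$ through $C17$.
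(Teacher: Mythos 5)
Your overall strategy is the same as the paper's (factor $F$ modulo $2$, run $\ph$-Newton polygons at the repeated factors $x$ and $x+1$, apply Ore's theorem, then use the $L_p(f)>N_p(f)$ criterion and degree-$\le 7$ index theory for the exponent), but two of your concrete steps are wrong or missing, and they are exactly the steps where the real work lies. First, you locate the ``inseparable'' cases incorrectly. When $5\mid \n_2(b)$ (or $\n_2(a)=1$ and $5\mid \n_2(b)-1$) the residual polynomial of the relevant side at $\ph=x$ is $y^5+1$, and in characteristic $2$ this is \emph{not} $(y+1)^5$: one has $y^5+1=(y+1)(y^4+y^3+y^2+y+1)$, which is separable, so Ore's theorem applies directly and these sides simply contribute the $[1,4]$ blocks of cases C5, C7, C11, C14. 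The genuinely irregular situation occurs only at the factor $x+1$, namely when $a\equiv 1 \md{4}$ and $b\equiv 2\md{4}$ (cases C15--C18), where with $\n=\n_2(1+a+b)\ge 2$ and $\n_2(7+5a)\ge 2$ the degree-$2$ side of slope $-1$ has residual $(y+1)^2$, or the polygon data are not determined by these congruences at all. Your proposed remedy, replacing $\ph=x$ by a lift $x-2^{\,j}$, cannot be used here: $x-2^{\,j}$ reduces to $x$ modulo $2$, so it is not a lift of $x+1$. What is needed (and what the paper does) is to replace $x-1$ by $x-s$ with $s$ odd, chosen according to $(a,b)$ modulo $16$ or $32$ so that $F$ becomes $\psi_2$-regular, and then to separate the sub-cases C15, C16, C17, C18 by the resulting values of $\n_2(s^7+as^5+b)$ and $\n_2(7s^6+5as^4)$; alternatively genuine second-order Montes machinery would do, but your sketch as written does not reach these cases.

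Second, the determination of $i(K)$ in the common-index-divisor cases is asserted rather than proved. Saying that $\n_2(i(K))$ ``depends only on the decomposition type (Engstrom) and works out to $1$ or $2$'' skips the computation that distinguishes C9/C10 ($i(K)=2$) from C17 ($i(K)=2$ or $4$): the paper uses Engstrom's table only for the type $[1,1,1,4]$ (C11), and for $[1,1,1^5]$ it instead bounds $\n_2(i(K))$ by $\n_2((A_K:\Z[\th]))=ind_{\ph_1}(F)+ind_{\psi_2}(F)$, which the Newton polygons give as $1$ in C9/C10 (forcing $i(K)=2$) and $2$ in C17 (leaving $2$ or $4$). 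If you want to rely solely on ``type determines exponent'' you must actually exhibit the entry for $[1,1,1^5]$ at $p=2$ and degree $7$, and reconcile it with the fact that C9/C10 and C17 share this type while the theorem's table gives different conclusions; without that, your argument does not yield the values of $i(K)$ stated in Table \ref{table1}. (Minor points: the table has eighteen cases, not seventeen, and your list of case-producing congruences must also be corrected accordingly.)
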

\begin{table}[h!]
	\centering
	\begin{tabular} { | c | c | c | c|  }   
		\hline
		Case & Conditions & Factorization of $2A_K$& $i(K)$  \\
		\hline
		C1 & $a \equiv 1 \md{2}$ and $b \equiv 1 \md{2}$ & $[2^1,5^1]$& $1$  \\
		\hline
			C2 & $a \equiv 0 \md{2}$ and $b \equiv 1 \md{2}$ & $[1, 3, 3]$& $1$  \\
		\hline
			C3 & $7\n_2(a)>2\n_{2}(b)$ and $\n_2(b) \in \{1, 2, 3, 4, 5, 6\}$ &$[1^7]$& $1$ \\
		\hline
		C4 & $\n_2(a)=1, \n_2(b)\ge 4 $ and $5 \nmid \n_2(b)$ &$[1^2, 1^5]$& $1$ \\
		\hline
		C5 & $\n_2(a)=1$ and $5 \mid \n_2(b)$ &$[1, 1^2, 4]$& $1$ \\
		\hline
			C6 & $a\equiv 3 \md{8}, b\equiv 0 \md{8}$, and $5 \nmid \n_2(b)$ &$[1^5, 2]$& $1$ \\
		\hline
			C7 & $a\equiv 3 \md{8}, b\equiv 0 \md{8}$, and $5 \mid \n_2(b)$ &$[1, 2, 4]$& $1$ \\
		\hline
			C8 & $a\equiv 7 \md{8}, b\equiv 4 \md{8}$ &$[2, 1^5]$& $1$ \\
		\hline
			C9 & $a\equiv 3 \md{8}, b\equiv 4 \md{8}$ &$[1, 1, 1^5]$& $2$  \\
		\hline
			C10 & $a\equiv 7 \md{8}, b\equiv 0 \md{8}$, and $5 \nmid \n_2(b)$ &$[1, 1, 1^5]$ & $2$ \\
		\hline
			C11 & $a\equiv 7 \md{8}, b\equiv 0 \md{8}$, and $5 \mid \n_2(b)$ &$[1, 1, 1, 4]$ &  $2$\\
		\hline
			C12 & $a \equiv 3 \md{4}$ and $b \equiv 2 \md{4}$ & $[1^2, 1^5]$ & $1$  \\
		\hline
			C13 & $a \equiv 1 \md{4}, b \equiv 0 \md{4}$ and $5 \nmid \n_{2}(b)$ & $[1^2, 1^5]$ & $1$  \\
		\hline
		C14 & $a \equiv 1 \md{4}, b \equiv 0 \md{4}$ and $5 \mid \n_{2}(b)$ & $[1, 1^2,  4]$ & $1$  \\
		\hline
			C15 &  $(a, b) \in \{(1, 10), (9, 2), (1, 6), (9, 14)\} \md{16}$ & $[1^2,  1^5]$ & $1$  \\
		\hline
			C16 & $(a, b) \in \{(1, 18), (17, 2), (1, 14), (17, 30)\} \md{32}$  & $[2,  1^5]$ &  $1$ \\
		\hline
		C17 & $(a, b) \in \{(1, 2), (17, 18), (1, 30), (17, 14)\}\md{32}$ & $[1, 1,  1^5]$ &  $2$ or $4$ \\
		\hline
			C18 & $(a, b) \in \{(5, 2), (5, 14), (13, 6), (13, 10)\}\md{16}$ & $[1^5,  1^5]$ &  $1$ \\
			\hline
		
	\end{tabular}
	\caption{The factorization of $2A_K$ and the value of $i(K)$.}
	\label{table1}
\end{table}
\begin{corollary}\
	\\Let $K=\Q(\th)$ be a number field with $\th$ a root of a monic irreducible polynomial $F(x)=x^7+ax^5+b \in \Z[x]$.   Then
	\begin{enumerate}
		\item   $i(K)>1$ if and only if one of the conditions  $C9, C10, C11, C17$ hold. Otherwise, $i(K)=1$. 
		\item If any of the  conditions   $C9, C10, C11, C17$ holds, then $K$ is not monogenic; $\Z_K$ has no power integral basis. 
	\end{enumerate}
\end{corollary}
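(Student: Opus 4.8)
The plan is to obtain the Corollary as an immediate consequence of Theorems \ref{pge3} and \ref{p=2}, together with the elementary properties of $i(K)$ recorded in Section 1.

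First I would recall, straight from the definition \eqref{Defi(K)}, that a rational prime $p$ divides $i(K)$ if and only if $p$ is a common index divisor of $K$; hence $i(K)>1$ if and only if $K$ has at least one common index divisor. Theorem \ref{pge3} rules out every odd prime as a common index divisor, so the only prime that can divide $i(K)$ is $2$, and therefore $i(K)$ is a power of $2$. Now I would invoke the final assertion of Theorem \ref{p=2}, according to which $2$ is a common index divisor of $K$ exactly when one of the conditions $C9, C10, C11, C17$ holds. Chaining these three facts, we get that $i(K)>1$ if and only if $2\mid i(K)$, if and only if one of the conditions $C9, C10, C11, C17$ holds; and when none of them holds, $i(K)$ has no prime divisor at all, so $i(K)=1$. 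This establishes part (1). (Alternatively, one may read the precise value of $i(K)$ directly from Table \ref{table1}; the argument just given has the advantage of not requiring a verification that the eighteen cases listed there are exhaustive.)

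For part (2), I would use the standard fact already noted in Section 1: if $K$ is monogenic, say $A_K=\Z[\eta]$ with $K=\Q(\eta)$, then $(A_K:\Z[\eta])=1$, and since $i(K)$ divides this index it follows that $i(K)=1$. Contrapositively, $i(K)>1$ forces $K$ to be non-monogenic, i.e.\ $A_K$ admits no power integral basis. Hence, whenever one of $C9, C10, C11, C17$ holds, part (1) gives $i(K)>1$, and the desired conclusion follows.

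There is no genuine obstacle here: the substance of the Corollary is entirely contained in the two main theorems, and what remains is only the bookkeeping of assembling the equivalences and recalling that a common index divisor obstructs monogenity. The sole point calling for a word of care is that in case $C17$ Table \ref{table1} records the still-undetermined value ``$2$ or $4$'' for $i(K)$; this does not affect the statement, since in that case $i(K)\in\{2,4\}$ is in any event $>1$.
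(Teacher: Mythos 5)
Your proposal is correct and matches the paper's (implicit) argument: the corollary is deduced directly from Theorems \ref{pge3} and \ref{p=2} (with Table \ref{table1}), together with the fact recalled in Section 1 that monogenity forces $i(K)=1$. Nothing further is needed.
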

\begin{remarks}\
	The condition $i(K)=1$ is not sufficient for $K$ to be monogenic. Indeed there exist non-monogenic number fields their index equal $1$. For example, consider the pure cubic number field  $K=\Q(\sqrt[3]{m}),$ where $m=1+9k$ and $k=10$ or $12$. According to the results of   \cite{GR17},  by Ga\'al and Remete, $i(K)=1$, and the corresponding  index form equation  is $I(x_2, x_3)= 3x_1^2+3x_1^2x_2+x_1x_2^2-kx_2^3 \pm 1$  has no integral solutions. Hence, $K$ is not monogenic.
\end{remarks}


\section{Examples}
To illustrate our results, we propose some examples. 
Let $K=\Q(\th)$ be a septic number field with $\th$ a root of a monic irreducible polynomial $F(x)=x^7+ax^5+b \in \Z[x]$. 
\begin{enumerate}
\item Let $F(x)=x^7+867x^5+68$. Since $F(x)$ is a $17$-Eisenstein polynomial,  it is irreducible over $\Q$. By Case $C9$ of Table \ref{table1} of Theorem \ref{p=2}, $i(K)=2$. 	So, $K$ is not monogenic.
\item  Let $F(x)=x^7+45927x^5+24$. The polynomial $F(x)$ is irreducible over $\Q$ as it is a $3$-Eisenstein polynomial. In view of Case $C10$ of Table \ref{table1} of Theorem \ref{p=2}, $i(K)=2$. Consequently, $A_K$ has no power integral basis.
\item  Let $F(x)=x^7+33x^5+66$. As $F(x)$ is a $11$-Eisenstein polynomial, it irreducible over $\Q$. According to Case $C17$ of Table  \ref{table1}, $K$ is not monogenic and $i(K) \in \{2, 4\}$.
\item  Let $F(x)=x^7+p^rx^5+p$, where $p$ is an odd rational prime and $r$ is a positive integer. By Theorem \ref{pge3} and Case $C1$ of Table \ref{table1}, $i(K)=1$.
\end{enumerate}
\section{Preliminary results}
Let $K $ be a number field generated by $\th$, a root of a monic irreducible trinomial $F(x)= x^7+ax^5+b \in \Z[x]$ and $A_K$  its ring of integers. Let $p$ be a rational prime.  We start by recalling  the  following Lemma  which gives a necessary and sufficient condition for a rational prime  $p$ to be a prime common index divisor of $K$. This Lemma will  play an important role in the proof of our results (see \cite[Theorems 4.33 and 4.34 ]{Na} and \cite{R}). 
\begin{lemma} \label{comindex}
	Let  $p$ be a rational prime  and $K$  a number field. For every positive integer $f$, let $L_p(f)$ denote the number of distinct prime ideals of $A_K$ lying above $p$ with residue degree $f$ and $N_p(f)$ denote the number of monic irreducible polynomials of  $\F_p[x]$ of degree $f$. Then $p$ is a common index divisor of $K$ if and only if $L_p(f) > N_p(f)$ for some positive integer $f$.
\end{lemma}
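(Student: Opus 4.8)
The plan is to deduce the divisibility $p\mid i(K)$ from a purely local condition at $p$ and then convert that condition into the stated counting inequality. Since $i(K)=\gcd_\eta (A_K:\Z[\eta])$ over all generators $\eta$ of $K$, the prime $p$ is a common index divisor exactly when $p\mid(A_K:\Z[\eta])$ for every generator $\eta$; equivalently, when no generator satisfies $\n_p\big((A_K:\Z[\eta])\big)=0$. Because $\n_p\big((A_K:\Z[\eta])\big)$ equals the index of $\Z_p[\eta]$ inside $A_K\otimes_\Z\Z_p$, the first step is to pass to $\Z_p$ and use the decomposition $A_K\otimes_\Z\Z_p\cong\prod_{i=1}^{g}\Oc_i$, where $\Oc_i$ is the completion of $A_K$ at the prime $\pF_i$ above $p$, a local ring with residue field $\F_{p^{f_i}}$ and ramification $e_i$. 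Under this isomorphism, the vanishing $p\nmid(A_K:\Z[\eta])$ becomes the statement $\Z_p[\eta]=\prod_{i=1}^{g}\Oc_i$.

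Second, I would characterise the generators $\eta$ realising $\Z_p[\eta]=\prod_i\Oc_i$. Writing the image of $\eta$ as $(\eta_1,\dots,\eta_g)$ and reducing modulo the maximal ideals yields residues $\bar\eta_i\in\F_{p^{f_i}}$; let $\phi_i\in\F_p[x]$ be the minimal polynomial of $\bar\eta_i$. Generation of the product forces, on one hand, that each $\eta_i$ generate $\Oc_i$ over $\Z_p$, so $\bar\eta_i$ generates $\F_{p^{f_i}}$ and $\phi_i$ is irreducible of degree $f_i$; on the other hand, the Chinese Remainder Theorem over $\Z_p$ requires the factors to be coprime modulo $p$, which (the $\phi_i$ being irreducible) means the $\phi_i$ are pairwise distinct. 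Conversely these two conditions suffice: each $\Oc_i$ is monogenic over $\Z_p$, so one may select a generator whose residual minimal polynomial is any prescribed irreducible $\phi_i$ of degree $f_i$; gluing these local generators by CRT produces a generator of $\prod_i\Oc_i$. Hence $p\nmid i(K)$ if and only if one can attach to $\pF_1,\dots,\pF_g$ pairwise distinct monic irreducibles $\phi_i\in\F_p[x]$ with $\deg\phi_i=f_i$.

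Third comes the combinatorial translation. Grouping the primes above $p$ by residue degree, an injective degree-preserving assignment $\pF_i\mapsto\phi_i$ exists precisely when, for each $f$, the number $L_p(f)$ of primes of residue degree $f$ does not exceed the number $N_p(f)$ of available monic irreducibles of degree $f$ over $\F_p$; this is the pigeonhole principle applied degree by degree. Therefore $p\nmid i(K)$ if and only if $L_p(f)\le N_p(f)$ for all $f$, and passing to contrapositives gives exactly the lemma: $p$ is a common index divisor of $K$ if and only if $L_p(f)>N_p(f)$ for some positive integer $f$.

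The main obstacle is the sufficiency half of the local characterisation, that is, producing from the abstract existence of the distinct $\phi_i$ an honest global generator $\eta\in A_K$ with $p\nmid(A_K:\Z[\eta])$. This requires the monogenicity of each completion $\Oc_i$ over $\Z_p$, the CRT gluing of the chosen local generators into an element of $A_K\otimes_\Z\Z_p$, and finally a weak-approximation argument: since $A_K$ is dense in $A_K\otimes_\Z\Z_p$, one approximates the glued element closely enough $p$-adically to preserve the index condition while choosing $\eta$ generically so that it still generates $K$ over $\Q$ (has $n$ distinct conjugates). By contrast, the necessity half is a clean consequence of the product structure over $\Z_p$ together with the pigeonhole bound.
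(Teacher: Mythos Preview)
The paper does not give its own proof of this lemma; it merely cites the result from Narkiewicz's book (Theorems~4.33 and~4.34) and Dedekind's original memoir. Your argument is correct and is essentially the classical Dedekind--Hensel proof: localise at $p$, use that $A_K\otimes_{\Z}\Z_p\cong\prod_i\Oc_i$, and reduce the question of $p\nmid(A_K:\Z[\eta])$ to assigning pairwise distinct monic irreducibles in $\F_p[x]$ of the correct degrees to the primes above $p$.

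One small simplification: the weak-approximation step at the end does not need the extra ``genericity'' clause. If $\eta\in A_K$ is chosen so that $\Z_p[\eta]=A_K\otimes_{\Z}\Z_p$, then $\eta$ already has degree $n=[K:\Q]$ over $\Q_p$, hence degree $n$ over $\Q$, so $\Q(\eta)=K$ automatically. Thus any $p$-adic approximation in $A_K$ that preserves the local index condition is already a primitive element for $K$.
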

To apply the above  Lemma \ref{comindex}, we will need to determine the number of distinct prime ideals of $A_K$ lying above $p$. We will use Newton polygon techniques. So, let us  shortly recall   some fundamental notions and results on this method. For more details, we refer to \cite{ Nar, Narprime, MN92, O}.  The reader can see also \cite{BRM, BATCA}.  
Let $p$  be a rational prime  and $\n_p$  the discrete valuation of $\Q_p(x)$  defined on $\Z_p[x]$ by $$\n_p\left(\sum_{i=0}^{m} a_i x^i\right) = \min \{ \n_p(a_i), \, 0 \le i \le m\}.$$ Let $\phi(x) \in \mathbb{Z}[x]$ be a monic polynomial whose reduction modulo $p$ is irreducible. The polynomial $F(x) \in \mathbb{Z}[x]$ admits a unique $\phi$-adic development $$ F(x )= a_0(x)+ a_1(x) \phi(x) + \cdots + a_n(x) {\phi(x)}^n,$$ with $ \deg \ ( a_i (x))  < \deg \ ( \phi(x))$. For every $0\le i \le n,$   let $ u_i = \n_p(a_i(x))$. The $\phi$-Newton polygon of $F(x)$ with respect to $p$ is the  lower boundary convex envelope of the set  of points $ \{  ( i , u_i) \, , 0 \le i \le n \, , a_i(x) \neq 0  \}$ in the euclidean plane, which we denote   by $N_{\phi} (F)$. The polygon  $N_{\phi} (F)$ is the union of different adjacent sides $ S_1, S_2, \ldots , S_g$ with increasing slopes $ \lambda_1, \lambda_2, \ldots,\lambda_g$. We shall write $N_\phi(F) = S_1+S_2+\cdots+S_g$. The polygon determined by the sides of negative slopes of $N_{\phi}(F)$ is called the  $\phi$-principal Newton polygon of $F(x)$  with respect to $p$ and will be denoted by $\npp{F}$. The length of $\npp{F}$ is $ l(\npp{F}) = \nu_{\overline{\ph}}(\overline{F(x)})$;  the highest power of $\phi$ dividing $F(x)$ modulo $p$.\\
Let $\mathbb{F}_{\phi}$ be the finite field   $ \mathbb{Z}[x]\textfractionsolidus(p,\phi (x)) \simeq \mathbb{F}_p[x]\textfractionsolidus (\overline{\ph(x)}) $.
We attach to  any abscissa $ 0 \leq i \leq  l(\npp{F})$ the following residue coefficient $ c_i \in  \mathbb{F}_{\phi}$
:

$$c_{i}=
\left
\{\begin{array}{ll} 0,& \mbox{if }  (i , u_i ) \, \text{ lies  strictly  above }  \ \npp{F},\\
\dfrac{a_i(x)}{p^{u_i}}
\,\,
\md{(p,\phi(x))},&\mbox{if } \  (i , u_i ) \, \text{lies on } \npp{F}. 
\end{array}
\right.$$Let $S$ be one of the sides of $\npp{F}$. Then the length of $S$, denoted $l(S)$ is the length of its  projection to the horizontal axis and its height, denoted $h(S)$ is the length of its  projection to the vertical axis.  Let   $\lambda = - \frac{h(S)}{l(S)}= - \frac{h}{e} $  its slope, where $e$ and $h$ are two positive coprime integers.  The degree of $S$ is $d(S) = \gcd(h(S), l(S))=  \frac{l(S)}{e}$; it is equal to the the number of segments into which the integral lattice divides $S$. More precisely, if $ (s , u_s)$ is the initial point of $S$, then the points with integer coordinates  lying in $S$ are exactly $$  (s , u_s) ,\ (s+e , u_s - h) , \ldots, (s+de , u_s - dh).$$  The natural integer $e= \frac{l(S)}{d(S)}$ is called the ramification index of the side $S$ and denoted by $e(S)$. We attach to $S$ the following residual polynomial:  $$ R_{\l}(F)(y) = c_s + c_{s+e}y+ \cdots + c_{s+(d-1)e}y^{d- 1}+ c_{s+de}y^d \in \mathbb{F}_{\phi}[y].$$  
Now, we give some related definitions.
\begin{definitions}
	Let $F(x) \in \Z[x]$ be a monic irreducible polynomial. Let $F(x) \equiv \prod_{i=1}^t \ph_i(x)^{l_i} \md{p}$ be  the factorization  of $F(x)$ into a product  of powers of distinct monic irreducible polynomials in  $\mathbb{F}_p [x]$. For every $i=1,\dots,t$, let  $N_{\ph_i}^+(F)=S_{i1}+\dots+S_{ir_i}$, and for every {$j=1,\dots, r_i$},  let $R_{\l_{ij}}(F)(y)=\prod_{s=1}^{s_{ij}}\psi_{ijs}^{n_{ijs}}(y)$ be the factorization of $R_{\l_{ij}}(F)(y)$ in $\F_{\ph_i}[y]$. 
	\begin{enumerate}
		\item  For every $i=1,\dots,t$, the $\ph_i$-index of $F(x)$, denoted by $ind_{\ph_i}(F)$, is  deg$(\ph_i)$ multiplied by  the number of points with natural integer coordinates that lie below or on the polygon $N_{\ph_i}^{+}(F)$, strictly above the horizontal axis  and strictly beyond the vertical axis.
		\item The polynomial $F(x)$ is said to be $\phi_i$-regular with respect to $\n_p$ (or $p$) if for every $j=1,\dots, r_i$,  $R_{\l_{ij}}(F)(y)$ is separable; $n_{ijs}=1$.
		\item The polynomial $F(x)$ is said to be $p$-regular if it is $\phi_i$-regular for every $ 1 \leq i \leq t$.
	\end{enumerate}
\end{definitions} 
Now, we recall Ore's theorem which will be  used in the proof of  Theorems \ref{pge3}  and \ref{p=2} (see   \cite[Theorems 1.13, 1.15 and  1.19]{Nar},  \cite{MN92} and \cite{O}).
\begin{theorem}\label{ore} (Ore's Theorem)\
	\\Let $K$ be a number field generated by  $\th$, a root of a monic irreducible polynomial $F(x) \in \Z[x]$.  Under the above notations, we have: 
	\begin{enumerate}
		\item
		$$ \nu_p((A_K:\Z[\th]))\ge \sum_{i=1}^t ind_{\ph_i}(F).$$ Moreover, the  equality holds if $F(x)$ is $p$-regular
		\item
		If  $F(x)$ is $p$-regular, then 
		$$pA_K=\prod_{i=1}^t\prod_{j=1}^{r_i}
		\prod_{s=1}^{s_{ij}}\pF^{e_{ij}}_{ijs},$$ where $e_{ij}$ is the ramification index
		of the side $S_{ij}$ and $f_{ijs}=\mbox{deg}(\ph_i)\times \mbox{deg}(\psi_{ijs})$ is the residue degree of $\mathfrak{p}_{ijs}$ over $p$.
	\end{enumerate}
\end{theorem}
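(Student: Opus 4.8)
The plan is to prove the two assertions together by peeling off the Newton-polygon data in successive reductions, splitting off a genuine factor of $F$ over $\Z_p$ at each stage and accounting simultaneously for the corresponding piece of the index. Since both $\n_p((A_K:\Z[\th]))$ and the shape of $pA_K$ depend only on the \'etale $\Q_p$-algebra $\Q_p\ot_\Q K\cong\prod_i K_i$, I would work entirely over $\Z_p$. First I would apply Hensel's lemma to the factorization $\ol{F}=\prod_{i=1}^t\ol{\ph_i}^{\,l_i}\md p$ to obtain a genuine factorization $F=\prod_{i=1}^t F_i$ in $\Z_p[x]$ with $\ol{F_i}=\ol{\ph_i}^{\,l_i}$; this yields $\Q_p\ot_\Q K=\prod_i K_i$ with $K_i=\Q_p[x]/(F_i)$, and both the $p$-index and the ideal factorization decompose as products over $i$. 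Hence it suffices to treat a single $\ph=\ph_i$ with $\ol F=\ol\ph^{\,l}$.

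For such an $F$ I would first invoke the \emph{theorem of the polygon}: writing $\npp{F}=S_1+\dots+S_g$ with strictly increasing slopes, the polygon records the $\ph$-adic valuations $\n_\ph(\a)$ of the roots $\a\in\ol{\Q_p}$ of $F$, and grouping roots by the side they lie on produces a factorization $F=\prod_j F_j$ over $\Z_p$ with $N_\ph(F_j)$ one-sided of slope $\l_j$ and length $l(S_j)$. This reduces matters to a single side $S$ of slope $\l=-h/e$ in lowest terms, length $l$, and degree $d=l/e$. The heart of part $(2)$ is then the \emph{theorem of the residual polynomial}: I would attach to $S$ its residual polynomial $R_\l(F)(y)\in\F_\ph[y]$ of degree $d$ and factor it as $R_\l(F)(y)=\prod_s\p_s^{\,n_s}$ in $\F_\ph[y]$. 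Via the correspondence between residual roots and $p$-adic roots, each $\p_s$ lifts to a factor $F_s\mid F$ over $\Z_p$ with $\deg F_s=e\,n_s\deg\p_s\deg\ph$. In the $p$-regular case every $n_s=1$, each $F_s$ is irreducible over $\Q_p$, and it defines a local field of ramification index $e$ and residue degree $\deg\ph\cdot\deg\p_s$; this produces a prime $\pF_{ijs}$ above $p$ with exactly the invariants $e_{ij}=e$ and $f_{ijs}=\deg\ph_i\cdot\deg\p_{ijs}$ of part $(2)$. Reassembling over $i,j,s$ and using $\sum_s\deg\p_{ijs}=d_{ij}$, $\sum_j e_{ij}d_{ij}=l_i$, and $\sum_i l_i\deg\ph_i=\deg F$ recovers the Fundamental Equality \eqref{FE}, which certifies that no prime above $p$ has been missed.

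For the index statement $(1)$ I would use the discriminant relation $\n_p(\mathrm{disc}(F))=\n_p(d_K)+2\,\n_p((A_K:\Z[\th]))$ and compute $\n_p(\mathrm{disc}(F))$ from the Newton data. The contribution attached to each $\ph_i$, beyond the part already carried by $d_K$, equals twice the weighted lattice-point count defining $ind_{\ph_i}(F)$, namely $\deg\ph_i$ times the number of integer points lying on or below $\npp{F}$, strictly above the horizontal axis and strictly to the right of the vertical axis. This gives $\n_p((A_K:\Z[\th]))\ge\sum_{i=1}^t ind_{\ph_i}(F)$ in general, since passing to higher-order Newton polygons, which is forced precisely when some $R_{\l_{ij}}(F)$ is inseparable, can only add further lattice points and hence increase the index beyond the first-order count. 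When $F$ is $p$-regular all residual polynomials are separable, no second-order refinement occurs, the first-order count is exact, and equality holds.

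The hard part will be the theorem of the residual polynomial together with the sharpness of the index bound: one must show rigorously that a \emph{separable} residual factor $\p_s$ yields an irreducible $p$-adic factor with precisely the predicted pair $(e,f)$ and contributes no hidden index, whereas an inseparable factor forces a strictly larger index. This is exactly the dichotomy between the single-step analysis terminating (the regular case) and its being iterated, and controlling it — equivalently, proving that the bound in $(1)$ is attained if and only if $F$ is $p$-regular — is the technical core of Ore's theorem.
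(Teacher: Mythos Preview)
The paper does not prove this theorem at all: it is stated as a quotation from the literature, with the proof deferred entirely to \cite[Theorems 1.13, 1.15 and 1.19]{Nar}, \cite{MN92} and \cite{O}. There is therefore no ``paper's own proof'' to compare against. Your sketch is in fact a faithful outline of the argument in those references (Hensel lifting along the mod-$p$ factorization, then the theorem of the polygon to split along sides, then the theorem of the residual polynomial to split along irreducible factors of $R_{\l}(F)$, with the index read off as a lattice-point count), so you are reconstructing the cited proof rather than offering an alternative.

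One small overreach: in your final paragraph you describe the technical core as showing that the bound in part~(1) is attained \emph{if and only if} $F$ is $p$-regular. The theorem as stated (and as proved in the references) only asserts the ``if'' direction; $p$-regularity is sufficient for equality but not necessary, and the converse is not part of Ore's theorem. You should not build that equivalence into what you are trying to prove.
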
 
The following result is an immediate consequence of the above theorem. 
\begin{corollary}\label{corollaryore} Under the above hypotheses, then the following hold:
	\begin{enumerate}
		\item  If for some $i=1, \ldots, r$, $l_i=1$, then the factor $\ph_i(x)$ of $F(x)$ modulo $p$ provides a unique prime ideal of $A_K$ lying above $p$, of residue degree equals $\deg(\ph_i(x))$ and of  ramification index  equals $1$.
		\item If for some $i=1, \ldots, r$, $N_{\ph_i}^+(F)=S_{i1}+S_{i2}+\cdots+S_{ik}$ has $k$  sides of degree $1$ each, then the factor $\ph_i(x)$ of $F(x)$ modulo $p$ provides $k$ prime ideals of $A_K$ lying above $p$ with the same  residue degree equals $\deg(\ph_i(x))$, and  of ramification indices $e(\pF_{ij1}/p)=e(S_{ij}), \, j=1, \ldots, k$.
	\end{enumerate}
	
\end{corollary}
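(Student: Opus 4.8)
The plan is to deduce both statements directly from Ore's Theorem (Theorem \ref{ore}) by checking that the stated hypotheses automatically force $\ph_i$-regularity, and then reading off the ramification index and residue degree of each associated prime from the sides of $N_{\ph_i}^+(F)$ together with the factorizations of their residual polynomials. The unifying observation is that a side of degree $1$ always carries a \emph{linear} residual polynomial, which is the source of both the regularity and the simple counting of primes.

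For part (1): First I would note that $l_i = 1$ means $\nu_{\ol{\ph_i}}(\ol{F}) = 1$, so the principal polygon has length $l(N_{\ph_i}^+(F)) = 1$. Since each side has positive integral horizontal length, a polygon of total length $1$ consists of a single side $S_{i1}$ with $l(S_{i1}) = 1$; hence $d(S_{i1}) = \gcd(h(S_{i1}), 1) = 1$ and the ramification index is $e(S_{i1}) = l(S_{i1})/d(S_{i1}) = 1$. The residual polynomial $R_{\l_{i1}}(F)(y)$ then has degree $d(S_{i1}) = 1$, so it is linear, hence irreducible and separable, and $F$ is $\ph_i$-regular. Applying Theorem \ref{ore}(2) to this factor, the single side with its single linear residual factor contributes exactly one prime ideal $\pF$ of ramification index $e(S_{i1}) = 1$ and residue degree $\deg(\ph_i) \cdot 1 = \deg(\ph_i)$, which is the claim.

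For part (2): The reasoning is identical up to bookkeeping. Since every side $S_{ij}$ has $d(S_{ij}) = 1$, each residual polynomial $R_{\l_{ij}}(F)(y)$ is linear, hence separable, so $F$ is $\ph_i$-regular and Theorem \ref{ore}(2) applies to the $\ph_i$-part. Each side $S_{ij}$ then contributes a single prime ideal, because its linear residual polynomial has exactly one irreducible factor, of degree $1$ and with multiplicity $n_{ij1} = 1$; that prime has ramification index $e(S_{ij})$ and residue degree $\deg(\ph_i) \cdot 1 = \deg(\ph_i)$. Summing over $j = 1, \ldots, k$ yields exactly $k$ primes above $p$ attached to $\ph_i$, all of residue degree $\deg(\ph_i)$ and with ramification indices $e(S_{ij})$, as asserted.

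The one genuinely delicate point I would be careful about is that Theorem \ref{ore} is stated under \emph{global} $p$-regularity, whereas the corollary constrains only a single factor $\ph_i$. I would resolve this by invoking the local, per-factor nature of the Ore--Montes factorization: the set of primes above $p$ whose associated irreducible factor modulo $p$ is $\ph_i$ is determined solely by $N_{\ph_i}^+(F)$ and the residual polynomials attached to its sides, provided $F$ is $\ph_i$-regular, independently of the behaviour of the other factors $\ph_{i'}$. The hypotheses of each part guarantee this $\ph_i$-regularity for free, since a degree-$1$ side always has a linear (hence separable) residual polynomial, so no additional assumption on the remaining factors is needed.
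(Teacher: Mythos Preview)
Your proposal is correct and follows exactly the route the paper indicates: the paper states the corollary as ``an immediate consequence'' of Theorem~\ref{ore} and gives no further argument, and your write-up simply spells out that deduction by observing that a side of degree $1$ forces a linear (hence separable) residual polynomial, which yields $\ph_i$-regularity and allows the prime count, residue degrees, and ramification indices to be read off directly. Your final paragraph on the per-factor (rather than global) nature of the regularity hypothesis is a worthwhile clarification that the paper leaves implicit but relies on repeatedly (e.g.\ in Case~A5).
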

The study of the following example is based on Newton polygon techniques.
\begin{example}
	Consider the monic irreducible polynomial $F(x) = x^9 +54 x +134 $, and let $K=\Q(\th)$ with $\th$ a root of $F(x)$.  For $p=3$, we have  $	F(x) \equiv \ph_1(x)^9 \md{3}$, where $\phi= x+2$. The $\ph$-adic development of $F(x)$  is
	\begin{eqnarray*}
	F(x)&=&-486+2358\ph(x)-4608\ph(x)^2+5376\ph(x)^3-4032\ph(x)^4+2016\ph(x)^5-672\ph(x)^6 \nonumber   \\
	&+&144\ph(x)^7- 18\ph(x)^8+\ph(x)^9.
	\end{eqnarray*}
	Thus, $N_{\ph_1}^{+}(F)= S_{11}+S_{12}+S_{13}$ with respect to $\n_3$,  has three sides  of degree $1$ each joining the points $(0,5),(1,2),(3,1)$, and $(9,0)$ in the Euclidean plane  with respective slopes $\l_{11} =-3$, $\l_{12}=\frac{-1}{2}$, and $\l_{13}=\frac{-1}{6}$ (see FIGURE 1).     Their attached residual polynomials
	$R_{\l_{1k}}(F)(y), k=1,2,3$,  are irreducible in $\F_{\ph}[y] \simeq \F_3[y] $ as they are of degree $1$ each. Thus, $F(x)$ is $\ph_i$-regular.  Hence it is $3$-regular. By  Theorem \ref{ore}, we have
	$$\nu_2((A_K:\Z[\th])) = ind_{\ph_1}(F) = \deg(\ph_1) \times 4 = 4 $$ and $$3A_K = \pF_{111} \cdot  \pF_{121}^2  \cdot  \pF_{131}^6,$$ where $f(\pF_{1k1}/3)= 1$ for $k=1,2,3$. Thus, for $p=2$, we have  $L_2(1)=3 > N_2(1)=2$. By Lemma \ref{comindex}, $2$ divides $i(K)$. Consequently, $K$ is not monogenic. 
	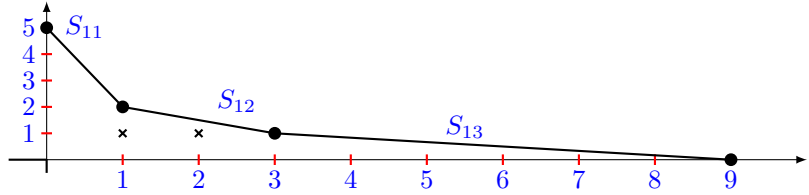
\begin{figure}[htbp] 
		\centering
		\begin{tikzpicture}[x=1cm,y=0.35cm]
		\draw[latex-latex] (0,6) -- (0,0) -- (10,0) ;

		\draw[thick] (0,0) -- (-0.5,0);
		\draw[thick] (0,0) -- (0,-0.5);
		
		\draw[thick,red] (1,-2pt) -- (1,2pt);
		\draw[thick,red] (2,-2pt) -- (2,2pt);
		\draw[thick,red] (3,-2pt) -- (3,2pt);
		\draw[thick,red] (4,-2pt) -- (4,2pt);
		\draw[thick,red] (5,-2pt) -- (5,2pt);
		\draw[thick,red] (6,-2pt) -- (6,2pt);
		\draw[thick,red] (7,-2pt) -- (7,2pt);
		\draw[thick,red] (8,-2pt) -- (8,2pt);
		\draw[thick,red] (9,-2pt) -- (9,2pt);
		\draw[thick,red] (-2pt,1) -- (2pt,1);
		\draw[thick,red] (-2pt,2) -- (2pt,2);
		\draw[thick,red] (-2pt,3) -- (2pt,3);
		\draw[thick,red] (-2pt,4) -- (2pt,4);	
		\node at (1,0) [below ,blue]{\footnotesize  $1$};
		\node at (2,0) [below ,blue]{\footnotesize $2$};
		\node at (3,0) [below ,blue]{\footnotesize  $3$};
		\node at (4,0) [below ,blue]{\footnotesize  $4$};
		\node at (5,0) [below ,blue]{\footnotesize  $5$};
		\node at (6,0) [below ,blue]{\footnotesize  $6$};
		\node at (7,0) [below ,blue]{\footnotesize $7$};
		\node at (8,0) [below ,blue]{\footnotesize  $8$};
		\node at (9,0) [below ,blue]{\footnotesize  $9$};
		\node at (0,1) [left ,blue]{\footnotesize  $1$};
		\node at (0,2) [left ,blue]{\footnotesize  $2$};
		\node at (0,3) [left ,blue]{\footnotesize  $3$};
		\node at (0,4) [left ,blue]{\footnotesize  $4$};
		\node at (0,5) [left ,blue]{\footnotesize  $5$};
		\draw[thick, mark = *] plot coordinates{(0,5) (1,2) (3,1) (9,0)};
		\draw[thick, only marks, mark=x] plot coordinates{(1,1) (1,2)   (2,1)     (3,1)     };
		\draw[thick, only marks, mark=*] plot coordinates{  };	
		\node at (0.5,4.2) [above  ,blue]{\footnotesize $S_{11}$};
		\node at (2.5,1.4) [above   ,blue]{\footnotesize $S_{12}$};
		\node at (5.5,0.4) [above   ,blue]{\footnotesize $S_{13}$};
		\end{tikzpicture}
		\caption{  $N_{\ph_1}^{+}(F)$ with respect to $\n_3$.}
	\end{figure}	
\end{example}
\section{Proofs of main results}
After  recalling necessary preliminaries and results in the above section, we are now in the position to  prove our main results. Let us begin by the proof of Theorem \ref{pge3}.
\begin{proof}[Proof of Theorem \ref{pge3}]
Since the degree of $K$ is $7$, by 	 the result of \.{Z}yli\'{n}ski  \cite{Zylinski}, if $p$ divides $i(K)$, then $p<7$,  see also  \cite{Engstrom} by Engstrom. Therefore, the candidate rational  primes to be a common index divisor of $K$ are $2, 3$ and $5$. So, to prove this theorem, it sufficient to show that $3 \nmid i(K)$ and $5 \nmid i(K)$. On the other hand, by  \cite[Proposition 2.13]{Na}, for any $\eta \in \Z_K$, we have the following index formula:
 \begin{equation}\label{indexdiscrininant}
\n_p(D (\eta)) =  2 \n_p(( \Z_K : \Z [\eta])) + \n_p(D_K),
\end{equation}
where $D(\eta)$ is the discriminant of the minimal polynomial of $\eta$ and $D_K$ is the discriminant of $K$. It follows  by Definition $(\ref{Defi(K)})$ of $i(K)$ that  if $p$ divides $i(K)$, then $ p$  divides $ \Delta(F)$. Recall also  that 
\begin{equation}\label{DiscriminantF(x)}
\Delta(F)=-b^4(7^7b^2+2^2\times5^5a^7).
\end{equation}\\
$\bullet$ Let us first show  that  the prime $3$ does not divide $i(K)$.      By Relation  (\ref{indexdiscrininant}) and Formula  $(\ref{DiscriminantF(x)})$, if $3$ divides $i(K)$, then  $$(a, b) \in \{(1, 0), (-1, 0), (1, 1), (-1, 1), (0, 0)\} \md{3}.$$  We distinguish several sub-cases. Table \ref{table2} gives the form of the  factorization of the ideal  $3A_K$ into a product of powers of prime ideals of $A_K$  in all possible  cases. Note also that by $(\ref{Hypothese})$, if $3$ divides both $a$ and $b$, then $\n_3(a)<2 \,\, \text{or} \,\, \n_3(b)<7$. We discus each sub-case separately.
\newpage 
\begin{table}[h!]
	\centering
	\begin{tabular} { | c | c | c |}   
		\hline
		Case & Conditions & Factorization of $3A_K$    \\
		\hline
		A1 & $a \equiv 1 \md{3}, b \equiv 0 \md{3}$ and $5\nmid \n_3(b)$  & $[1^5, 2]$ \\
		\hline
			A2 & $a \equiv 1 \md{3}$ and $5 \mid \n_3(b)$  & $[1, 2, 4]$ \\
		\hline
		A3 & $a \equiv -1 \md{3}, b \equiv 0 \md{3}$ and $5\nmid \n_3(b)$  & $[1, 1, 1^5]$ \\
		\hline
		A4 & $a \equiv -1 \md{3}$ and $5 \mid \n_3(b)$  & $[1, 1, 1, 4]$ \\
		\hline
			A5 & $a \equiv 1 \md{3}, b \equiv 1 \md{3}$ & $[1, 1, 2,3], [1^2, 2, 3]$ or $[2, 2, 3]$ \\
		\hline
			A6 & $a \equiv -1 \md{3}, b \equiv 1 \md{3}$ &$[7]$\\
		\hline
			A7 & $7\n_3(a)>2\n_{3}(b)$ and $\n_3(b) \in \{1, 2, 3, 4, 5, 6\}$ &$[1^7]$\\
		\hline
			A8 & $\n_3(a)=1, \n_3(b)\ge 4 $ and $5 \nmid \n_3(b)$ &$[1^2, 1^5]$ \\
		\hline
			A9 & $\n_3(a)=1$ and $5 \mid \n_3(b)$ &$[1, 1^2, 4]$\\
		\hline
	\end{tabular}
	\caption{The factorization of $3A_K$}
	\label{table2}
\end{table}\textbf{A1:} $a \equiv 1 \md{3}, b \equiv 0 \md{3}$ and $5\nmid \n_3(b)$.  In this case, $F(x) \equiv \ph_1(x)^5 \ph_2(x) \md{3}$, where $\ph_1(x)=x$ and $\ph_2(x)=x^2+1$.   Since $5$ does not divide $\n_3(b)$, $N_{\ph_1}^{+}(F)=S_{11}$ has a single side of degree $1$ joining the points  $(0, \n_3(b))$ and $(5, 0)$ with ramification index $e_{11}=5$. Also, we have  $\n_{\ph_2}(\ol{F(x)})=1$. By Corollary \ref{corollaryore}, we see that $3A_K=\pF_{111}^5\cdot \pF_{211}$, where $\pF_{111}$ and $\pF_{211}$ are two prime ideals of $A_K$ with  respective residue degrees $f(\pF_{111}/3)=1$ and $f(\pF_{211}/3)=2$. In view of Lemma \ref{comindex}, $3$ does not divide $i(K)$.\\
\textbf{A2:} $a \equiv 1 \md{3}$ and $5\mid \n_3(b)$. Here, the factorization of $F(x)$ modulo $3$, the polygons $N_{\ph_1}^{+}(F)$ and  $N_{\ph_2}^{+}(F)$ are the same as in the above case. Further, since $\n_3(b)$ is divisible by $5$, we have 
$$R_{\l_{12}}(F)(y)=
\left
\{\begin{array}{ll} (y+1)(y^4-y^3+y^2-y+1) ,& \mbox{if } b_3 \equiv 1 \md{3},\\
(y-1)(y^4+y^3+y^2+y+1),&\mbox{if } b_3 \equiv -1 \md{3}.
\end{array}
\right.$$Thus, $F(x)$ is $3-$regular. By Theorem \ref{ore}, $3A_K=[1, 2, 4]$. Therefore, by Lemma \ref{comindex}, $3$ does not divide $i(K)$.\\
\textbf{A3:} $a \equiv -1 \md{3}, b \equiv 0 \md{3}$ and $5\nmid \n_3(b)$. In this case, $F(x) \equiv \ph_1(x)^5\ph_2(x)\ph_3(x) \md{3},$ where $\ph_1(x)=x, \ph_2(x)=x-1$ and $\ph_3(x)=x+1$.  Also, $N_{\ph_1}^{+}(F)=S_{11}$ and $R_{\l_{11}}(F)(y)$ are the same as in Case \textbf{A1}. Using Corollary \ref{ore},   $3A_K=[1, 1, 1^5]$. Hence, by Lemma \ref{comindex},  $3$ does not divide $i(K)$.\\
\textbf{A4:} $a \equiv -1 \md{3}$ and $5 \mid \n_3(b)$.  Here, the factorization of $F(x)$ modulo $3$ is  the same as in the above case. Further,  $N_{\ph_1}^{+}(F)=S_{11}$ and $R_{\l_{11}}(F)(y)$ are the same as in Case \textbf{A2}. Therefore, by Corollary \ref{corollaryore},   $3A_K=[1, 1, 1, 4]$. Consequently, by Lemma \ref{comindex},  $3 \nmid i(K)$.\\
\textbf{A5:} $a \equiv 1 \md{3}, b \equiv 1 \md{3}$. In this case, $F(x) \equiv \ph_1(x)^2\ph_2(x)\ph_3(x) \md{3},$ where $\ph_1(x)=x-1, \ph_2(x)=x^2-x-1$ and $\ph_3(x)=x^3-x-1$. By Corollary \ref{corollaryore}(1), the factor $\ph_2(x)$ provides a unique prime ideal $\pF_{211}$ of $A_K$ of residue degree $2$ and of ramification index $1$. Also,  the factor $\ph_3(x)$ provides a unique prime ideal $\pF_{311}$ of $A_K$ of residue degree $3$ and of ramification index $1$.  It follows that $3A_K=\pF_{211}\cdot \pF_{311} \cdot \aF$, where $\aF$ is a proper ideal of $A_K$. By the Fundamental Equality (see $(\ref{FE})$),  the form of the factorization of $\aF$ is either $[1, 1], [1^2]$ or $[2]$. Therefore, the form of the factorization of $3A_K$ is either $[1, 1, 2, 3], [1^2, 2, 3]$ or $[2, 2, 3]$. Hence, by Lemma \ref{comindex}, $3$ does not divide $i(K)$.\\
\textbf{A6:} $a \equiv -1 \md{3}, b \equiv 1 \md{3}$. In this case, $F(x)$ is irreducible modulo $3$. By Corollary \ref{ore}(1), $3A_K$ is a prime ideal. Hence, $3$ does not divide $i(K)$.\\
$\bullet$ Note that by $(\ref{Hypothese})$, if $3$ divides both $a$ and $b$,  then $\n_3(a)<2 \,\, \text{or} \,\, \n_3(b)<7.$ So, the conditions \textbf{A7-A9} cover all possible cases when $a$ and $b$ are both divisible by $3$. On the other hand, $F(x) \equiv \ph_1(x)^7 \md{3}$, where $\ph_1(x)=x$. Thus, $N_{\ph_1}^{+}(F)$ is the lower convex hull of the points $(0, \n_3(b)), (5, \n_3(a))$ and $(7,0)$.\\
\textbf{A7:}  $7\n_3(a)>2\n_{3}(b)$ and $\n_3(b) \in \{1, 2, 3, 4, 5, 6\}$.  In this case, $N_{\ph_1}^{+}(F)=S_{11}$ has a single side of degree $1$ and of ramification index  $7$. By Corollary \ref{corollaryore},  $3A_K=[1^7]$. By Lemma \ref{comindex}, $3$ does not divide $i(K)$.\\
\textbf{A8:} $\n_3(a)=1, \n_3(b)\ge 4 $ and $5 \nmid \n_3(b)$. Here,  $N_{\ph_1}^{+}(F)=S_{11}+S_{12}$ has two sides of degree $1$ joining  the points  $(0, \n_3(b)), (5, 1)$ and $(7,0)$. Their respective ramification indices are: $e_{11}=5$ and $e_{12}=2$. Therefore, by Corollary \ref{corollaryore}(1),  $3A_K=[1^2, 1^5]$. Thus, $3 \nmid i(K)$. \\
 \textbf{A9:} $\n_3(a)=1, \n_3(b)\ge 4 $ and $5 \mid \n_3(b)$. In this case, $N_{\ph_1}^{+}(F)=S_{11}+S_{12}$ has two sides joining  the points  $(0, \n_3(b)), (5, 1)$ and $(7,0)$. Further, we have $d(S_{12})=5, e_{12}=1$ and $R_{\l_{12}}(F)(y)$ is the same as in Case \textbf{A2}. So, $F(x)$ is $3$-regular. Using Theorem \ref{ore}, we get  $3A_K=[1, 1^2, 4]$. So, by Lemma \ref{comindex},  $3$ does not divide $i(K)$.\\
 We conclude that in every case, $3$ is not a common index divisor of $K$.\\
 $\bullet$ Now, we prove that $5$ does not divide $i(K)$. By $(\ref{indexdiscrininant})$ and $(\ref{DiscriminantF(x)})$, if $5$ divides $i(K)$, then $5$ divides $b$. We distinguish seven cases which cover all possible cases. Table \ref{table3} gives the form of the factorization of $5A_K$ in $A_K$.
 \begin{table}[h!]
 	\centering
 	\begin{tabular} { | c | c | c |}   
 		\hline
 		Case & Conditions & Factorization of $5A_K$    \\
 		\hline
 		B1 & $7\n_5(a)>2\n_{5}(b)$ and $\n_5(b) \in \{1, 2, 3, 4, 5, 6\}$ &$[1^7]$ \\
 	\hline
 	B2 & $\n_5(a)=1, \n_5(b)\ge 4 $ and $5 \nmid \n_5(b)$ &$[1^2, 1^5]$ \\
 	\hline
 	B3 & $\n_5(a)=1$ and $5 \mid \n_5(b)$ &$[1^7]$ \\
 	\hline
 		B4 & $a \equiv \pm 2 \md{5}, b \equiv 0 \md{5}$, and $5 \nmid \n_5(b)$ &$[1^5, 2]$ \\
 	\hline
 		B5 & $a \equiv \pm 2 \md{5}$ and $5$ divides $ \n_5(b)$ &$[2,  1^5]$ or $[1, 1^4, 2]$\\
 	\hline
 		B6  & $a \equiv \pm 1 \md{5}, b \equiv 0 \md{5}$, and $5 \nmid \n_5(b)$ &$[1, 1, 1^5]$ \\
 	\hline
 		B7 & $a \equiv \pm 1 \md{5}$ and $5$ divides $ \n_5(b)$ &$[1, 1, 1^5]$ or $[1, 1, 1, 1^4]$ \\
 	\hline
 	\end{tabular}
 	\caption{The factorization of $5A_K$}
 	\label{table3}
 \end{table}\\
\textbf{B1, B2:} These cases are respectively similar to \textbf{A7, A8} of Table \ref{table2}.\\
\textbf{B3:} $\n_5(a)=1$ and $5 \mid \n_5(b)$. In this case $F(x) \equiv \ph_1(x)^7 \md{5}$.  In order to apply Ore's Theorem (Theorem \ref{ore}), we replace the lifting $\ph_1(x)=x$ of $\ol{\ph_1(x)} = \ol{x} \in \F_5[x]$ by  $\psi_1(x)=x-5^kc$ for some positive integers $c$ and $k$,  which allows to  the polynomial $F(x)$ to be $\psi_1$-regular. For any rational prime $p$, it is important to note that Theorem \ref{ore} does not depend on the monic irreducible  liftings of the monic irreducible factors of $F(x)$ modulo $p$.  The $\psi_1$-adic development of $F(x)$ is 
\begin{eqnarray}\label{Devforp=5}
F(x)&=& 5^{7k}c^7+5^{5k}ac^5+b+5^{4k+1}c^4(7 \cdot 5^{2k-1} c^2 + a) \psi_1(x)+ 5^{3k+1}c^3(21 \cdot 5^{k-1}c+2a) \psi_1(x)^2 \nonumber  \\ 
&+& 5^{2k+1}c^2 (7 \cdot 5^{2k}c^2+2a) \psi_1(x)^3+ 5^{k+1} (7 \cdot 5^{2k}+a) \psi_1(x)^4+ (21+5^{5k}c^2+a) \psi_1(x)^5 \nonumber \\
&+&7 \cdot 5^k c \psi_1(x)^6+ \psi_1(x)^7.
\end{eqnarray}
Let $a_0= 5^{7k}c^7+5^{5k}ac^5+b, a_1=5^{4k+1}c^4(7 \cdot 5^{2k-1} c^2 + a), a_2=5^{3k+1}c^3(21 \cdot 5^{k-1}c+2a), a_3= 5^{2k+1}c^2 (7 \cdot 5^{2k}c^2+2a), a_4=5^{k+1} (7 \cdot 5^{2k}+a), a_5=21 \cdot 5^{5k}c^2+a, a_6=7 \cdot 5^k c$, and $\mu_i=\n_5(a_i)$ for $i=1, \ldots, 6$.\\
If $\n_5(b)=5$, by choosing  $k\ge 2$, then we get $\mu_0= \n_5(b)=5, \mu_1 \ge 9, \mu_2 \ge 7, \mu_3 \ge 5, \mu_4 \ge 3, \mu_5=1$ and $\mu_6 \ge 2$. Thus, by $(\ref{Devforp=5})$,   $N_{\psi_1}^{+}(F)=S_{11}$ has a single side of degree $1$ joining the points $(0, 5)$ and $(7, 0)$. Its ramification index equals $7$. By Corollary \ref{corollaryore}(2),   $3A_K=[1^7]$. When $\n_5(b) \ge 10$, we choose $k=1$, then we obtain the same polygon, and the same factorization of $3A_K$ as  when  $\n_{5}(b)=5$. 	It follows that $5 \nmid i(K)$.\\
\textbf{B4:} $a \equiv \pm 2 \md{5}, b \equiv 0 \md{5}$ and $5 \nmid \n_5(b)$. Here, $F(x) \equiv \ph_1(x)^5 \ph_2(x) \md{5},$ where $\ph_1(x)=x$ and $\ph_2(x)=x^2+a$. By Corollary \ref{corollaryore}(1), the factor $\ph_2(x)$ provides a unique prime ideal of $A_K$ lying above $5$ of residue degree $2$ with ramification index $1$, say $\pF_{211}$.  Since, $5 \nmid \n_5(b)$, by Corollary \ref{corollaryore}(2),  $\ph_1(x)$ provide a unique prime  ideal of $A_K$ lying above $5$ of residue degree $1$ with ramification index $5$, say $\pF_{111}$. Therefore, $5A_K=[1^5, 2]$.\\
\textbf{B5:}  $a \equiv \pm 2 \md{5}$ and $5$ divides $ \n_5(b)$. Set $\n_5(b)=5k$. Let $C_{a,b_5} \in \Z$ such that $5$ divides  $aC_{a,b_5}^5+b_5$. To treat this case,  we use $\psi_1(x)=x-5^kC_{a,b_5}$ as in Case \textbf{B3}.   Write  $a_0=5^{7k}C^7_{a,b_5}+5^{5k}(aC_{a,b_5}^5+b_5)$.   According to $(\ref{Devforp=5})$,  we have  $\mu_1=4k+1, \mu_2=3k+1, \mu_3=2k+1$ and  $\mu_4=k+1$. We distinguish two cases.  If   $\n_5(aC_{a,b_5}^5+b_5)=1$, then  $\mu_0=5k+1$. Thus, by $(\ref{Devforp=5})$, $N_{\psi_1}^{+}(F)=S_{11}$ has a single side of degree $1$ joining the points $(0, 5k+1)$ and $(5, 0)$. Its ramification index equals $5$. By Corollary \ref{corollaryore},  the factor $\psi_1(x)$ (or $\ph_1(x)$) provides   a unique prime ideal of $A_K$ lying above $5$ of residue degree $1$ with ramification index $1$. Therefore,   $5A_K=[2, 1^5]$. 
If   $\n_5(aC_{a,b_5}^5+b_5) \ge 2$, then  $\mu_0 \ge 5k+2$. Thus, by $(\ref{Devforp=5})$, $N_{\psi_1}^{+}(F)=S_{11}+S_{11}$ has two sides of degree $1$ each joining the points $(0, \mu_0), (1, 4k+1)$ and $(5, 0)$. Their ramification indices are $e_{11}=1, e_{12}=4$. By Theorem \ref{ore},  the factor $\psi_1(x)$ (or $\ph_1(x)$) provides   two prime ideals of $A_K$ lying above $5$ of residue degree $1$ each. Therefore,  $5A_K=[1, 1^4, 2]$. \\
\textbf{B6, B7:} We proceed analogously and respectively as in Cases \textbf{B4, B5}. \\
Since the factorization of $5A_K$ does not satisfies the inequality  $L_5(f) > N_5(f)$ for any positive integer $f$. We conclude by Lemma \ref{comindex} that $5$ does not divide $i(K)$.
This completes the proof of the theorem.
\end{proof}
From Theorem \ref{pge3}, if $p \ge 3$, then $p
 \nmid i(K)$. 	Therefore, $i(K)=2^{\n_2(i(K))}$.
Now, let us prove Theorem \ref{p=2}. In every case, we give the form of  the factorization of $2A_K$ and the value of $i(K)$.
\begin{proof}[Proof of Theorem \ref{p=2}]\
\\ \textbf{C1:}  $a \equiv 1 \md{2}$ and $b \equiv 1 \md{2}$. In this case, $F(x) \equiv  (x^2 + x + 1) (x^5 + x^4 + x^3 + x + 1) \md{2}$.	 By Corollary \ref{corollaryore}(1), $2A_K=[2, 5]$. In view of Lemma \ref{comindex}, $2$ does not divide $i(K)$. So, $i(K)=1$.\\
\textbf{C2:} $a \equiv 0 \md{2}$ and $b \equiv 1 \md{2}$. Here, $F(x) \equiv (x + 1) (x^3 + x + 1) (x^3 + x^2 + 1) \md{2}$. By Corollary \ref{corollaryore}(1),  $2A_K=[1, 3, 3]$. Therefore, $\n_2(i(K))=0$. So, $i(K)=1$.\\
$\bullet$ In Cases \textbf{C3, C4, C5}, $2$ divides both $a$ and $b$. These cases  are respectively similar to Cases $A7, A8, A9$ when we consider $p=3$.  We omit their proofs. In these cases $i(K)=1$.\\
$\bullet$ From Case \textbf{C6},  $2$ divides $b$, but does not divide $a$. It follows that $F(x) \equiv \ph_1(x)^5\ph_2(x)^2 \md{2}$, where $\ph_1(x)=x$ and $\ph_2(x)=x-1$.  \textbf{For $\ph_1(x)$},  the polynomial $F(x)$ is $\ph_1$-regular. Moreover, by using Theorem \ref{ore},  we have the following:
\begin{enumerate}
\item If  $5$ does not divide $\n_2(b)$, then $\ph_1(x)$ provides a unique prime ideal lying above $2A_K$, of residue degree $1$ with  ramification index $5$. So, $2A_K=\pF_{111}^5\cdot\aF$, where $f(\pF_{111}/2)=1$ and $\aF$ is a non-zero ideal of $A_K$.
\item 	If  $5$ does divides $\n_2(b)$, then $\ph_1(x)$ provides two  prime ideals lying above $2A_K$ with ramification index $1$ each. One of them has a residue degree $1$, and the other has a residue degree  $4$.  Thus, $2A_K=\pF_{111}\cdot \pF_{121} \cdot \aF$, where $f(\pF_{111}/2)=1, f(\pF_{112}/2)=4$ and  $\aF$ is a non-zero ideal of $A_K$.
\end{enumerate}
Thus,  the number of prime ideals of $A_K$ that  divide $2A_K$, and  provided from $\ph_1(x)$, are determined with their residue degrees. On the other hand, \textbf{the ideal $\aF$ is provided from the factor $\ph_2(x)$. To factorize it, we analyze $N_{\ph_2}^{+}(F)$, the $\ph_2$-principal  Newton polygon of $F(x)$.} The $\ph_2$-adic development of $F(x)$ is  
\begin{equation}\label{Devph2a1b0}
F(x)=1+a+b+(7+5a)\ph_2(x)+(21+10a)\ph_2(x)^2+\ldots+\ph_2(x)^7
\end{equation}
Let $\n = \n_{2}(1+a+b)$ and $\mu= \n_2(7+5a)$. It follows by  $(\ref{Devph2a1b0})$  that  $N_{\ph_2}^{+}(F)$ is the lower convex hull of the points $(0, \n), (1, \mu)$ and $(2,0)$. Note also that the finite residual field $\F_{\ph_2}$ is isomorphic to $\F_2$. \\
\textbf{C6, C7, C8:}  In all these cases, we have $\n = 2$ and $\mu=1$. Thus,  $N_{\ph_2}^{+}(F)=S_{11}$ has a single side of degree $2$ joining the points $(0, 2), (1, 1)$ and $2, 0$. Further, we have $e_{11}$ and $R_{\l_{11}}(F)(y)=1+y+y^2$ which is  separable  in  $F_{\ph_2}[y]$. Therefore, by Theorem \ref{ore},    the form of the factorization of $\aF$ is $[2]$. Thus, we conclude the form of the factorization of $2A_K$ in these cases as given in Table \ref{table1}.\\
\textbf{C9, C10, C11:}  In all these cases, $\n \ge 2$ and $\mu=1$. Thus,  $N_{\ph_2}^{+}(F)=S_{11}+S_{12}$ has two sides of degree $1$ each, joining
 the points $(0, \n), (1, 1)$ and $(2, 0)$. Their ramification indices equal $1$. Their attached residual polynomial are separable as they are of degree $1$. By Theorem \ref{ore},      the form of the factorization of $\aF$ is $[1, 1]$. Therefore, we conclude the form of the factorization of $2A_K$ in these cases as given  in Table \ref{table1}. Since $L_2(1)=3>2 =N_{2}(1)$, by Lemma \ref{comindex}, $2 \mid i(K)$. In Case \textbf{C11}, we have  $2A_K=[1, 1, 1, 4]$, then according to  Engstrom's table concerning the index of number fields of degrees less than  or equal $7$ (see \cite[Page 234]{Engstrom}), we see that $\n_2(i(K))= 1$. On the other hand, in Cases \textbf{C9} and \textbf{C10}, we have  $2A_K=[1, 1, 1^5]$. Unfortunately,   Engstrom's table does not give $\n_2(i(K))$ for this factorization form. Using Theorem \ref{ore}(1),   one has: $$\n_2((A_K: \Z[\th]))= ind_{\ph_1}(F)+ind_{\ph_2}(F)= ind_{\ph_1}(F)+ind_{\psi_2}(F)=0+1=1.$$
 On the other hand, according to Definition \ref{Defi(K)}, we have $\n_{2}(i(K)) \le \n_2((A_K: \Z[\th]))$. Thus, $\n_{2}(i(K))=1$, and so $i(K)=2$ in \textbf{C9} and  \textbf{C10}.  \\
 \textbf{C12, C13, C14:} In all these cases, $\n=1$. Thus, $N_{\ph_2}^{+}(F)=S_{11}$ has a single side of degree $1$ with ramification index $2$. By Corollary  \ref{corollaryore}(1),  the form of the factorization of $\aF$ is $[1^2]$. Therefore,  $2A_K=[1^2, 1^5]$ or $2A_K=[1, 1^2, 4]$. Hence, by Lemma \ref{comindex}, $2 \nmid i(K)$. \\
 $\bullet$ From Case \textbf{C15}, we have $\n_2(1+a)=\n_2(b)=1$. It follows that $\min \{\n, \mu\} \ge 2$. Then we cannot control their values. To apply Ore's Theorem (Theorem \ref{ore}), we replace the lifting $\ph_2(x)=x-1$ of $\ol{\ph_2(x)}$ by  $\psi_2(x)=x-s$ for an adequate odd rational integer $s$ which allows to  the polynomial $F(x)$ to be $\psi_2$-regular. The $\psi_2$-adic development of $F(x)$ is  \begin{equation}\label{Devph2a1b0psi2}
 F(x)=s^7+as^5+b+(7s^6+5as^4)\psi_2(x)+(21s^5+10as^3)\psi_2(x)^2+\ldots+\psi_2(x)^7.
 \end{equation}
 Let $\omega=\n_{2}(s^7+as^5+b)$ and $\delta = \n_2(7s^6+5as^4)$. Thus, by $(\ref{Devph2a1b0psi2})$, $N_{\psi_2}^{+}(F)$ is the lower convex hull of the points $(0, \omega), (1, \delta)$ and $(2,0)$. In the next cases, we give  $s$ explicitly, and  the form of the factorization of $\aF$ in $A_K$. Remark in these cases that the factor $\ph_1(x)$ provide a unique prime ideal of residue degree $1$ with ramification index $5$, because $\n_{2}(b)=1$.\\
 \textbf{C15:} $(a, b) \in \{(1, 10), (9, 2), (1, 6), (9, 14)\} \md{16}$. When $(a, b) \in \{(1, 10), (9, 2)\} \md{16}$, we choose any  $s$ such that $s \equiv 3, 7, 11, \, \mbox{or} \, 15  \md{16}$, and if  $(a, b) \in \{ (1, 6), (9, 14)\} \md{16}$, consider any $s$ satisfying $s \equiv 1, 5, 9, \, \mbox{or} \, 13 \md{16}$.  Then, we get $\omega = 3$ and $\delta = 2$. It follow by $(\ref{Devph2a1b0psi2})$ that $N_{\psi_2}^{+}(F)=S_{11}$ has a single side of degree $1$ joining the points $(0, 3)$ and $(2, 0)$. Its ramification index equals $2$. By Corollary \ref{corollaryore},  the form of the factorization of $\aF$ is $[1^2]$. Therefore, $2A_K=[1^2, 1^5]$. So, $2 \nmid i(K)$.\\
	  \textbf{C16:}   $(a, b) \in \{(1, 18), (17, 2), (1, 14), (17, 30)\} \md{32}$. For  $(a, b) \in \{(1, 18), (17, 2)\} \md{16}$, we choose $s \equiv 3, 7, 11, 15, 19, 23, 27, 31 \md{32}$, and for  $(a, b) \in \{ (1, 14), (17, 30)\} \md{32}$, we choose any  $s\equiv 1, 5, 9, 13, 17, 21, 25, 29 \md{32}$. Then, we have $\omega=4$ and $\delta = 2$. It follows by $(\ref{Devph2a1b0psi2})$ that $N_{\psi_2}^{+}(F)=S_{11}$ has a single side of degree $2$ joining the points $(0, 4), (1, 2)$ and $(2, 0)$. Its attached residual polynomial is $R_{\l_{11}}(F)(y)=y^2+y+1$ which is separable in $\F_{\psi_2}[y]$. So, $F(x)$ is $\psi_2$-regular. By Theorem \ref{ore},  the form of the factorization of $\aF$ is $[2]$. Therefore, $2A_K=[2, 1^5]$. Hence, by Lemma \ref{comindex}, $2 \nmid i(K)$. \\
 \textbf{C17:} $(a, b) \in \{(1, 2), (17, 18), (1, 30), (17, 14)\}\md{32}$. When $(a, b) \in \{(1, 2), (17, 18)\}\md{32}$, let $s \equiv 3, 7, 11, 15, 19, 23, 27, 31 \md{32}$, and for$(a, b) \in \{ (1, 30), (17, 14)\}\md{32}$, let $s \equiv  1, 5, 9, 13, 17, 21, 25, 29  \md{32}$. Under these considerations, we have $\omega \ge 5$ and $\delta=2$. Thus, $N_{\psi_2}^{+}(F)=S_{11}+S_{12}$ has two sides of degree $1$ each joining the points $(0, \omega), (1, 2)$ and $(2, 0)$. Their ramification indices equal $1$. Using Theorem \ref{ore},  the form of the factorization of $\aF$ is $[1, 1]$. So, $2A_K=[1, 1, 1^5]$. Therefore, $2 \mid i(K)$.  By Theorem \ref{ore}(1),   one gets: $$\n_2((A_K: \Z[\th]))=  ind_{\ph_1}(F)+ind_{\psi_2}(F)=0+2=2.$$ So,  $\n_{2}(i(K)) \le 2$. Hence $i(K)=2 \, \mbox{or} \, 4$.\\ 
\textbf{C18:}  $(a, b) \in \{(5, 2), (13, 10), (5, 14), (13, 6)\}\md{16}$. For $(a, b) \in \{(5, 2), (13, 10)\}\md{16}$, we choose any $s \equiv 1, 5, 9, 13 \md{16}$, and for $(a, b) \in \{(5, 14), (13, 6)\}\md{16}$,  we consider any $s \equiv 3, 7, 11, 15 \md{16}$. Then, we get $\omega = 3$ and $\delta \ge 3$. It follows  that $N_{\psi_2}^{+}(F)=S_{11}$ has a single side of degree $1$ joining the points $(0, 3)$ and $(2, 0)$. As in Case \textbf{C15}, $2A_K= [1^2, 1^5]$. Consequently, $2 \nmid i(K)$.\\
This completes the proof of the theorem, and so $i(K)=1$.
\end{proof}


\begin{thebibliography}{99}	
	\bibitem{AN}  S. Ahmad, T. Nakahara and A. Hameed, On certain pure sextic fields related to a problem of Hasse, \textit{Int. J. Algebra  Comput} \textbf{26}(3) (2016), 577--583.
	\bibitem{ANHN} S. Ahmad, T, Nakahara and S.M. Husnine, Power integral bases for certain  pure sextic fields, \textit{Int. J. Number Theory} \textbf{10}(8) (2014), 2257--2265.
	
	\bibitem{BRM} H. Ben Yakkou,  On non-monogenic  number fields defined by trinomial of type $x^n+ax^m+b$   (2022) 	(to appear in 	forthcoming  issue of  Rocky Mountain J. Math), 	arXiv:2203.07625.
	
	\bibitem{BATCA} H. Ben Yakkou,  On monogenity of certain number fields defined by trinomial of type $x^8+ax+b$,  \textit{Acta Math. Hungar.}  \textbf{166} (2022), 614--623. 
	
	
	\bibitem{Gacta2}  Y. Bilu, I. Ga\'{a}l and K. Gy\H{o}ry, Index form equations in sextic fields: a hard computation, \textit{Acta Arithmetica} \textbf{	115}(1)  (2004), 85--96.
	\bibitem{Co} H. Cohen,  A Course in Computational Algebraic Number Theory,  GTM 138, (Springer-Verlag, Berlin  Heidelberg, 1993).
	\bibitem{R} R. Dedekind, \"Uber den Zusammenhang zwischen der Theorie der Ideale und der Theorie der h\"oheren Kongruenzen, \textit{G\"ottingen Abhandlungen} \textbf{23} (1878), 1--23.
	\bibitem{DS}  C. T. Davis and B. K.  Spearman, The index of quartic field defined by a trinomial $x^4+ax+b$, \textit{J. Algebra  its Appl.}  \textbf{17}(10)  (2018), 1850197.
	
	
	
	%
	\bibitem{FKcom} L. El Fadil and O. Kchit, On index divisors and monogenity of certain septic number fields defined by$x^7 + ax^3 + b$, Commun. Algebra (2022), 1-15.
	
	\bibitem{Engstrom} H. T. Engstrom, On the common index divisors of an algebraic number fields, \textit{Amer. Math. Soc.} \textbf{32}(2) (1930), 223--237.
	\bibitem{EG} 	J. H.  Evertse and K.  Gy\H{o}ry,  Discriminant equations in diophantine number theory, Cambridge Univ. Press (2017).
	\bibitem{G19} I. Ga\'al, Diophantine equations and power integral bases, Theory and algorithm, 2nd edn.,  Birkh\"auser (Boston, 2019).
	\bibitem{Ga21}    I. Ga\'{a}l, An experiment on the monogenity of a family of trinomials, \textit{JP  J.  Algebra  Num.  Theory  Appl.}  \textbf{51}(1) (2021), 97--111.
	\bibitem{Gacta1} I. Ga\'{a}l and  K. Gy\H{o}ry, Index form equations in quintic fields, \textit{Acta Arithmetica}  \textbf{89}(4) (1999), 379-396.
	\bibitem{GP} I. Ga\'al, A. Peth\H{o} and M. Pohst, On the resolution of index form equations in quartic number fields, \textit{J. Symbolic Comput.} \textbf{16}(1993), 563--584.
	\bibitem{GRN} I. Ga\'{a}l and L. Remete, Non-monogenity in a family of octic fields, \textit{Rocky Mountain J. Math.} \textbf{47}(3) (2017), 817--824.
	\bibitem{GR17}  I. Ga\'al and L. Remete, Power integral bases and monogenity of pure  fields, \textit{J.  Num. Theory.} \textbf{173} (2017), 129--146.
	
	\bibitem{Nar}  J. Gu\`{a}rdia, J. Montes and E. Nart, Newton polygons of higher order in algebraic number theory, \textit{Trans. Amer. Math. Soc.} \textbf{364}(1) ( 2012), 361--416.
	\bibitem{Narprime}  J. Gu\`{a}rdia, J. Montes and E. Nart,  Higher Newton polygons in the computation of discriminants and prime ideal decomposition in number fields, {\it J. Théor. Nombres Bordx.} \textbf{23}(7) (2021), 667--669.
	
	
	
	
	
	
	\bibitem{Gyoryredecide} K.Gy\H{o}ry,	Sur les polynomes a coefficients entiers et de discriminant donne III, {\it Publ. Math. Debrecen. } 23 (1976), 141--165.
	
\bibitem{Gyoryr1983} K.Gy\H{o}ry, Bounds for the solutions of norm form, discriminant form
	and index form equations in finitely generated integral domains, \textit{Acta
	Math. Hungar.} \textbf{42} (1983), 45-80.
	\bibitem{JW} { L. Jones and D. White, Monogenic trinomials with non-squarefree discriminant, \textit{International Journal of Mathematics} \textbf{32}(13) (2021).}
	\bibitem{MN92}  J. Montes and E. Nart, On theorem of Ore, \textit{Journal of Algebra } \textbf{146}(2) (1992), 318--334. 
	
	\bibitem{Na} W. Narkiewicz, Elementary and analytic theory of algebraic numbers, 3rd edn., Springer Monographs in Mathematics (Springer-Verlag, Berlin, 2004).
	\bibitem{O} \O. Ore, Newtonsche Polygone in der Theorie der algebraischen K\"{o}rper, \textit{ Math. Ann.} \textbf{99} (1928), 84--117.
	\bibitem{PP} A. Peth\H{o} and M. Pohst, On the Indices of multiquadratic number fields, \textit{ Acta Arithmetica} \textbf{153}(4) (2012), 393--414.
	\bibitem{PethoZigler} A. Peth\H{o}, V. Ziegler,  On biquadratic fields that admit unit power integral basis, \textit{Acta Math. Hungar.} 133,  (2011) 221--241.
	\bibitem{Smtacta} H. Smith, The monogenity of radical extension, \textit{Acta Arithmitica} \textbf{198}(2021), 313--327.   
	
	\bibitem{Zylinski}E. \.{Z}yli\'{n}ski,  Zur Theorie der au\ss{}erwesentlichen diskriminantenteiler algebraischer k\"{o}rper, \textit{Math. Ann.} \textbf{(73)} (1913), 273--274.
	


\end{thebibliography}
\end{document}